\documentclass[12pt,leqno]{amsart}

\usepackage[a4paper,margin=25mm]{geometry}

\newtheorem{theorem}{Theorem}
\newtheorem{proposition}[theorem]{Proposition}

\newcommand{\vol}{\operatorname{vol}}

\title{Maximal volumes of pyramids in the unit cube}

\author{Nikolai Nikolov}

\address{N. Nikolov\\Institute of Mathematics and Informatics\\Bulgarian Academy
of Sciences\\Acad. G. Bonchev 8, 1113 Sofia, Bulgaria
\vspace{1mm}
\newline Faculty of Information Sciences\\
State University of Library Studies and Information Technologies\\
Shipchenski prohod 69A, 1574 Sofia, Bulgaria}
\email{nik@math.bas.bg}

\thanks{The author was partially supported by the Bulgarian National Science Fund,
Ministry of Education and Science of Bulgaria under contract KP-06-N82/6.}

\subjclass[2020]{52A38, 52A40}
\keywords{unit cube, pyramid, maximal volume, hyperplane section, central section}

\begin{document}

\begin{abstract}
Let $Q_n=[0,1]^n$, $n\ge2$. Denote by $M_n$ be the maximal volume of a pyramid in $Q_n$
whose base is a hyperplane section of $Q_n$, and by $M_n^0$ the corresponding maximum
when the section contains the center of $Q_n$. We prove that $nM_n<(n+1)M_{n+1}^0$.
\end{abstract}

\maketitle

\section{Results}
Let $Q_n=[0,1]^n$, $n\ge2$. Denote by $M_n$ the maximal volume of a pyramid in $Q_n$
whose base is a hyperplane section of $Q_n$, and by $M_n^0$ the corresponding maximum
when the section contains the center $q_n$ of $Q_n$.

By \cite[Theorem~1]{A}, $M_n^0=c_n$, where
$$c_n=\frac{n}{2^n}\sum_{k=0}^{[n/2]}\frac{(-1)^k(n-2k)^{n-1}}{k!(n-k)!}$$
is the volume of the pyramid with vertex $0$ and base $Q_n\cap\{x_1+\cdots+x_n=n/2\}.$

\begin{proposition}\label{p1}
Let $H$ be a hyperplane through $q_n$. A pyramid in $Q_n$ with base $Q_n\cap H$ has volume
$M_n^0$ if and only if $H$ is orthogonal to a main diagonal of $Q_n$ and its vertex is an
endpoint of that diagonal.
\end{proposition}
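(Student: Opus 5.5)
We have to characterize the equality case in \cite[Theorem~1]{A}. Write $H=\{x:\langle a,x-q_n\rangle=0\}$ with $|a|=1$. A pyramid with base $Q_n\cap H$ and vertex $v\in Q_n$ has volume
$$V(H,v)=\frac1n\,|\langle a,v-q_n\rangle|\,\vol_{n-1}(Q_n\cap H).$$
For fixed $H$, the height $|\langle a,v-q_n\rangle|$ is a linear functional in absolute value, so it is maximized over $Q_n$ at vertices. Its maximum equals $\frac12\sum_i|a_i|=\frac12\|a\|_1$, attained exactly at vertices $v$ with $\operatorname{sign}(v_i-\tfrac12)=\operatorname{sign}a_i$ whenever $a_i\ne0$ (or at the antipodal such vertices). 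Thus
$$M_n^0=\frac1{2n}\max_{|a|=1}\|a\|_1\,A(a),\qquad A(a)=\vol_{n-1}(Q_n\cap a^\perp\text{ through }q_n),$$
and the problem reduces to determining when the product $F(a)=\|a\|_1A(a)$ attains its maximum.

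Next I use the symmetry of $Q_n$ to assume $a_i\ge0$, so that $a$ lies in the positive orthant of the sphere. Then I express $A(a)$ through the standard formula for central sections,
$$A(a)=\frac1\pi\int_{-\infty}^{\infty}\prod_{i=1}^n\frac{\sin(a_it)}{a_it}\,dt,$$
or through the distribution of $\sum a_i(U_i-\tfrac12)$ with $U_i$ uniform: $A(a)$ is the density at $0$ of $\sum a_iX_i$, where $X_i$ is uniform on $[-\tfrac12,\tfrac12]$. Hence $F(a)$ is the density at $0$ of $\sum b_iX_i$ with $b=a/\|a\|_1$ in the simplex, because scaling by $\|a\|_1$ turns $A(a)/\|a\|_1^{-1}$ into the density of the normalized sum. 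The claim therefore becomes: among convex combinations $S_b=\sum b_iX_i$ with $\sum b_i=1$, $b_i\ge0$, the density $f_b(0)$ is maximal only when $b=(1/n,\dots,1/n)$. I would trace through the proof of \cite[Theorem~1]{A} to check whether this uniqueness is already implicit there. If it is not, I would argue as follows. Fixing all but two coordinates $b_i,b_j$ with $b_i+b_j=s$, the symmetric unimodal variable $b_iX_i+b_jX_j$ has a trapezoidal density, and the trapezoids for different splittings are ordered: the more balanced split is more peaked, i.e.\ it majorizes in the sense of symmetric decreasing rearrangement. Convolving with the remaining symmetric unimodal part $Y$, whose density is strictly positive near $0$, preserves this comparison, and the comparison is strict unless the splitting is already balanced. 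So any maximizer must have all positive $b_i$ equal.

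It remains to compare $b$ uniform on $k$ coordinates for $k<n$ with the uniform case on $n$ coordinates. Here I would use the known monotonicity $c_k<c_n$, i.e.\ $k$-dimensional diagonal central sections versus $n$-dimensional ones in this normalization, or verify it directly from the formula for $f_b(0)$, which equals $c_n$ up to the normalization. This gives that $a$ is parallel to $(1,\dots,1)$, so that $H$ is orthogonal to a main diagonal. The vertex condition from the first step then forces $v$ to be an endpoint of that diagonal. The converse is immediate from the formula for $c_n$.

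The main obstacle is the strictness of the two-coordinate peakedness step, in particular showing that the comparison of trapezoids survives convolution with $Y$ strictly. It also remains to handle the comparison across different supports $k<n$, where I expect that a direct inequality $c_k<c_n$ must be established or cited.
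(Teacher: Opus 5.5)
Your reduction is correct and is a genuinely different route from the paper's. With $f_b$ the density of $\sum b_iX_i$, you get $M_n^0=\frac1{2n}\max f_b(0)$ over the simplex, and the equality case becomes uniqueness of the maximizer. However, the two steps that are supposed to deliver uniqueness fail as written.

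\textbf{Strictness of the two-coordinate step.} Positivity of $f_Y$ near $0$ is not the relevant condition, and the claim ``strict unless the splitting is already balanced'' is false for an arbitrary pair. Write $f_Y(x)=\phi(|x|)$ with $\phi$ nonincreasing. Put $D(r)=P(|Z'|\le r)-P(|Z|\le r)$, where $Z'$ is the more balanced two-term sum. Then $D>0$ on $(0,s/2)$ and $D=0$ at $0$ and on $[s/2,\infty)$. Integration by parts gives $f_{Z'+Y}(0)-f_{Z+Y}(0)=\int_{(0,s/2)}D\,d(-\phi)$, which vanishes exactly when $f_Y$ is constant on $(-s/2,s/2)$.

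This does occur. For $n=3$ and $b=(\varepsilon,2\varepsilon,1-3\varepsilon)$ with $0<\varepsilon\le 1/6$, every splitting of the first two coordinates gives $f_b(0)=1/(1-3\varepsilon)$, because $Y=(1-3\varepsilon)X_3$ has constant density on $[-3\varepsilon/2,3\varepsilon/2]$. So you must choose the pair: take $b_i=\max_k b_k$ and $0<b_j<b_i$. If $Y\equiv0$, the gain is $2/s-1/b_i>0$. Otherwise the plateau of $f_Y$ around $0$ has half-width at most $\frac12\max_{k\ne i,j}b_k\le b_i/2<s/2$ (induct on the number of summands), so the gain is positive.

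\textbf{Comparison across supports.} The inequality you plan to cite, $c_k<c_n$ for $k<n$, is false, since $c_n$ decreases ($c_2=1/2>c_3=3/8$). With $u_k=(1/k,\dots,1/k,0,\dots,0)$ one has $f_{u_k}(0)=2kc_k$, so what you need is $kc_k<nc_n$. You need not cite it. The weak two-coordinate comparison shows that replacing an entry $1/k$ of $u_k$ and a zero entry by $1/(2k),1/(2k)$ does not decrease $f(0)$. So if $u_k$ with $2\le k<n$ were a maximizer, so would be a vector with unequal positive entries $1/(2k)$ and $1/k$, contradicting the repaired strict step. The case $k=1$ reduces to $k=2$, or for $n=2$ follows from $f_{u_1}(0)=1<2=f_{u_2}(0)$.

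Once repaired, your argument is a self-contained Schur-concavity (Proschan-type peakedness) proof that reproves \cite[Theorem~1]{A} together with its equality case. The paper instead combines a local second-order computation at the diagonal, $V_n''(0)=\|z\|_2^2(c_n-c_{n-2})<0$, with a global property taken from the proof in \cite{A}: the segment from $\mathbf 1$ to any equality direction consists of equality directions. It also treats $n=2$ separately. That is shorter but relies on the internals of \cite{A}, which your route avoids.
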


It is well known that $\sqrt n\,c_n\to\sqrt{3/(2\pi)}$ (see, e.g., \cite{A}). Note that 
$c_1=c_2=1/2$ and $c_3=3/8$. Recall also that $M_2=1/2$.

\begin{proposition}\label{p2}
$M_3=(1+\sqrt2)/6$.
\end{proposition}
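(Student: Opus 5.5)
We prove the two inequalities $M_3\ge(1+\sqrt2)/6$ and $M_3\le(1+\sqrt2)/6$ separately. For the lower bound there is an explicit pyramid. Take $H_s=\{x_1+x_2+x_3=s\}$ with $1\le s\le 3/2$ and apex $(1,1,1)$. The section $Q_3\cap H_s$ is a hexagon, obtained from an equilateral triangle of area $\frac{\sqrt3}{2}s^2$ by removing three corner triangles of area $\frac{\sqrt3}{2}(s-1)^2$ each. The apex is at distance $(3-s)/\sqrt3$ from $H_s$, so the volume is
$$V(s)=\tfrac16\bigl(s^2-3(s-1)^2\bigr)(3-s)=\tfrac16(-2s^2+6s-3)(3-s).$$
We have $6V'(s)=6s^2-24s+21$, which vanishes at $s=2-1/\sqrt2\in[1,3/2]$. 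There $-2s^2+6s-3=\sqrt2$ and $3-s=1+1/\sqrt2$. Hence $V=(\sqrt2+1)/6$, which is larger than $c_3=3/8=V(3/2)$. This gives $M_3\ge(1+\sqrt2)/6$, and it also shows that the optimal section is not central.

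For the upper bound, write a hyperplane as $H=\{u\cdot x=t\}$ with $|u|=1$. Using the symmetries of $Q_3$ we may assume $u_1\ge u_2\ge u_3\ge0$ and $t\le |u|_1/2$. The volume of a pyramid with apex $p\in Q_3$ is $\frac13A_u(t)\,|u\cdot p-t|$, where $A_u(t)=\operatorname{area}(Q_3\cap H)$. This is maximized by the vertex $p=(1,1,1)$, giving
$$F(u,t)=\tfrac13A_u(t)\,(|u|_1-t).$$
Next we use the standard formula, valid when all $u_i>0$:
$$A_u(t)=\frac{1}{2u_1u_2u_3}\sum_{v\in\{0,1\}^3}(-1)^{|v|}(t-u\cdot v)_+^2 .$$
The degenerate cases $u_3=0$ are handled directly. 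There the sections are rectangles, and one checks as above that the bound is at most $1/3$. Thus $F$ is piecewise explicit, with pieces indexed by which vertices lie below $H$; these correspond to triangle, quadrilateral, pentagon or hexagon sections. On each piece we maximize over $t$ first, since $F$ is a polynomial of degree $3$ in $t$. We then maximize the resulting function of $u$ on the sphere sector.

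The main obstacle is showing that the maximum over $u$ is attained at $u=(1,1,1)/\sqrt3$. I would first try a smoothing argument. Fix the combinatorial type and replace $(u_2,u_3)$ (or $(u_1,u_2)$) by their average, keeping $|u|_1-t$ suitably normalized. One then shows that $F$ does not decrease, using the concavity of $\sqrt{A_u(t)}$ in $t$ (Brunn) together with the symmetry of the formula above.

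If smoothing proves awkward, the fallback is a direct case analysis:
\begin{itemize}
\item Triangle sections ($t\le u_3$) give at most $\frac{1}{6}\cdot\frac{t^2|u|(|u|_1-t)}{u_1u_2u_3}$, and this is bounded easily by the constraint $t\le u_3$.
\item Quadrilateral and pentagon sections each reduce to a two-variable inequality.
\item Hexagon sections are the main case; there Lagrange multipliers should single out the symmetric critical point.
\end{itemize}
In every case the resulting bound is compared with $(1+\sqrt2)/6$, completing the proof.
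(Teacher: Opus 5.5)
\textbf{Verdict: genuine gap.} The upper bound $M_3\le(1+\sqrt2)/6$, which is the real content of the proposition, is never actually proved.

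\textbf{What is correct.} Your lower bound is complete. The hexagonal section $x_1+x_2+x_3=2-1/\sqrt2$ with apex $(1,1,1)$ has volume $(1+\sqrt2)/6$, which realizes the paper's hexagon maximum. Your set-up for the upper bound is also sound: the symmetry reduction, the apex at the far vertex, the piecewise-quadratic area formula, and the bound $1/3$ for $u_3=0$ and for triangular sections. Your fallback case split by section type is the paper's route. In the paper's normalization (volume $=V/6$), the five pieces have maxima $2$ (parallelogram), $1+\sqrt2$ (hexagon), $8-4\sqrt2$ (pentagon), $9/4$ (trapezoid) and $2$ (triangle).

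\textbf{The fallback is not carried out.} The decisive cases are left as ``reduce to a two-variable inequality'' and ``Lagrange multipliers should single out the symmetric critical point''. These are not routine.
\begin{itemize}
\item The pentagon piece already reaches $8-4\sqrt2\approx2.343$, for example at $u\propto(\sqrt2,1,1)$, $t=u_1$, on the pentagon--hexagon boundary. This is within about $3\%$ of $1+\sqrt2\approx2.414$, so no crude estimate suffices.
\item On the hexagon piece ($u_1\le t\le|u|_1/2$), an interior critical-point computation is not enough. You must also treat the faces $t=u_1$ and $t=|u|_1/2$, and exclude non-symmetric critical points.
\end{itemize}

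\textbf{The smoothing idea does not close the gap.} Brunn's theorem gives concavity of $\sqrt{A_u(t)}$ in $t$ for fixed $u$. It says nothing about what happens when $u$ moves. Moreover, inside a fixed type the averaging can go the wrong way. On the trapezoid piece ($u_3\le t\le u_2$) one has
\[
F=\frac{(2t-u_3)(|u|_1-t)}{6u_1u_2}.
\]
\begin{itemize}
\item Replacing $(u_1,u_2)$ by their mean at fixed $t$ (hence fixed $|u|_1-t$) preserves the type but strictly decreases $F$ when $u_1>u_2$.
\item Replacing $(u_2,u_3)$ by their mean collapses the interval $u_3\le t\le u_2$.
\end{itemize}
Any workable smoothing would therefore have to re-optimize $t$ across type boundaries, which is exactly the analysis that is missing.

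\textbf{What you need to finish.} Carry out the explicit maximization on each piece and check that every piece stays at or below $(1+\sqrt2)/6$. The formula
\[
F=\frac{|u|_1-t}{6u_1u_2u_3}\sum_v(-1)^{|v|}(t-u\cdot v)_+^2
\]
is homogeneous of degree $0$ in $(u,t)$, so you may normalize $|u|_1-t=1$, as the paper effectively does.
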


\begin{proposition}\label{p3} $c_n<M_n<c_{n-1}$ for $n\ge 3.$
\end{proposition}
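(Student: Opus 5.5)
We prove the two inequalities $c_n<M_n$ and $M_n<c_{n-1}$ separately. Throughout we use the basic formula: a pyramid with base $Q_n\cap H$ and vertex $v\in Q_n$ has volume $\frac1n\,\mathrm{dist}(v,H)\,\vol_{n-1}(Q_n\cap H)$.

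\emph{Lower bound $c_n<M_n$.} Since $M_n\ge M_n^0=c_n$, it suffices to show that the extremal configuration of Proposition~\ref{p1} can be strictly improved by moving the hyperplane off the center. Take $H_t=\{x_1+\cdots+x_n=t\}$ with vertex $0$, so $t=n/2$ gives $c_n$. The volume is
$$V(t)=\frac1n\cdot\frac{t}{\sqrt n}\cdot A(t),$$
where $A(t)$ is the section area. We have $A(t)=\sqrt n\,f(t)$, with $f$ the density of a sum of $n$ independent uniform $[0,1]$ variables. Thus $V(t)=\frac1n\,t f(t)$, and
$$V'(n/2)=\frac1n\Bigl(f(n/2)+\tfrac n2 f'(n/2)\Bigr).$$
For $n\ge2$ the density $f$ is symmetric about $n/2$ and is $C^1$ there when $n\ge3$, so $f'(n/2)=0$. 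Hence $V'(n/2)=f(n/2)/n>0$, and increasing $t$ slightly gives a volume strictly larger than $c_n$. (For $n=3$ one can also check this directly against Proposition~\ref{p2}.)

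\emph{Upper bound $M_n<c_{n-1}$.} This is the main obstacle. The idea is to slice by the coordinate direction along which the pyramid is thinnest and compare with an $(n-1)$-dimensional central problem. Let $P$ be a pyramid with base $B=Q_n\cap H$, vertex $v$ and normal $a$ to $H$. Choose a coordinate, say $x_n$, such that $H$ is not orthogonal to $e_n$. For $s\in[0,1]$, each horizontal slice $P\cap\{x_n=s\}$ lies in $Q_{n-1}\times\{s\}$, and Fubini gives $\vol_n(P)=\int_0^1\vol_{n-1}(P_s)\,ds$. I would try to show the following.

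\begin{itemize}
\item The average of the slice volumes is at most the volume of a single $(n-1)$-dimensional pyramid in $Q_{n-1}$ whose base is a hyperplane section of $Q_{n-1}$ through its center. Here one uses convexity (Brunn--Minkowski) of $s\mapsto\vol_{n-1}(P_s)^{1/(n-1)}$ together with the fact that each $P_s$ is the convex hull of a point and a section of $Q_{n-1}$.
\item Alternatively, bound directly $\vol_{n-1}(Q_n\cap H)\le \vol_{n-1}(\text{projection})/|a_n|\cdot\|a\|$ and $\mathrm{dist}(v,H)\le\sum|a_i|/\|a\|$-type estimates.
\end{itemize}

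In either case the goal is $M_n\le M_{n-1}^0=c_{n-1}$, using the theorem of \cite{A} in dimension $n-1$, with strictness coming from Proposition~\ref{p1}: equality would force every slice to be an extremal diagonal configuration, which is impossible across all $s$.

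The first step I would try concretely is a Schwarz-type symmetrization in $x_n$. For fixed $H$ the optimal vertex is a cube vertex, so one reduces to $v=0$. One then writes $\vol(P)=\frac1n h\,A$, with $A$ the section area, as an integral over $x_n$ of $(n-1)$-dimensional sections of $Q_{n-1}$. Comparing with $c_{n-1}$ slice by slice is the delicate point. If this fails, the fallback is the explicit probabilistic formula for $A$ together with the known monotonicity $c_{n-1}>c_n$, which makes the inequality less tight as $n$ grows, plus a direct check for $n=3$ using Proposition~\ref{p2}: $(1+\sqrt2)/6<1/2$.
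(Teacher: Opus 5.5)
Your lower bound is correct and is the paper's own argument: with $V(t)=tf(t)/n$ and $c_n=f(n/2)/2$ you get $V'(n/2)=2c_n/n>0$. Your check of $n=3$ via Proposition~\ref{p2} is also fine.

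The upper bound, however, is not proved for any $n\ge4$, and the slicing plan cannot deliver it as stated. First, the slices $P_s=P\cap\{x_n=s\}$ are in general not pyramids over sections of $Q_{n-1}$: for vertex $0$ and base $Q_3\cap\{x_1+x_2+x_3=3/2\}$, the slice $x_3=1/4$ is a hexagon, not a triangle. Second, Brunn--Minkowski gives concavity, not convexity, of $s\mapsto\vol_{n-1}(P_s)^{1/(n-1)}$, and that only bounds the average slice by the largest one. More fundamentally, even if every slice were a pyramid in $Q_{n-1}$ over a hyperplane section, you would only get $\vol_n(P)\le M_{n-1}$. Your own lower bound gives $M_{n-1}>c_{n-1}$ for $n\ge4$. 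Nothing in the slicing forces the $(n-1)$-dimensional bases through the center, and that is exactly what is needed to reach $c_{n-1}=M_{n-1}^0$. The fallback is not an argument either: $c_{n-1}>c_n$ and $M_n>c_n$ give no upper bound on $M_n$. Moreover, $M_n$ must lie in $(c_n,c_{n-1})$, an interval of relative width about $1/(2n)$, so the inequality gets tighter, not looser, as $n$ grows.

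The missing idea is to go \emph{up} one dimension, which is Theorem~\ref{t4}. Take a base $Q_n\cap H_{a,t}$, where $H_{a,t}=\{\langle a,x-q_n\rangle=t\}$, $\|a\|_2=1$ and $t\ge0$. Any pyramid over it has volume at most $\frac1{2n}(\|a\|_1+2t)A_a(t)$. The central section $B$ of $Q_{n+1}$ with normal $(a,2t)$ satisfies $\vol_n(B)=\sqrt{1+4t^2}\int_{-1/2}^{1/2}A_a(2ts)\,ds\ge\sqrt{1+4t^2}\,A_a(t)$. This holds because $A_a$ is even and, by Brunn--Minkowski, non-increasing on $[0,\infty)$. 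The maximal height over $B$ is $(\|a\|_1+2t)/(2\sqrt{1+4t^2})$. Hence $nM_n\le(n+1)M_{n+1}^0=(n+1)c_{n+1}$, with strict inequality by Proposition~\ref{p1} and the equality case of Brunn--Minkowski. Even then one needs the separate inequality $(n+1)c_{n+1}<nc_{n-1}$, which is very delicate: for $n=4$ it reads $575/384<576/384$. The paper quotes it for odd $n\ge3$ and all $n\ge137$, and verifies the even cases $4\le n\le136$ by exact arithmetic. Neither ingredient appears in your proposal.
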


Therefore $M_n>M_{n+1}$ for $n\ge 2$, and $\sqrt n\,M_n\to\sqrt{3/(2\pi)}$.

The proof of Proposition \ref{p3} is based on the following upper bound for $M_n$.

\begin{theorem}\label{t4}
$nM_n<(n+1)M_{n+1}^0$ for $n\ge2$.
\end{theorem}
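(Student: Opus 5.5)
The idea is to lift an arbitrary pyramid in $Q_n$ to a pyramid in $Q_{n+1}$ whose base is a central section and whose volume, multiplied by $n+1$, is at least $n$ times the original volume. Then we use Proposition~\ref{p1} to get strictness.

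\emph{Setup.} Write a hyperplane as $H=\{a\cdot x=t\}$. Reflecting coordinates $x_i\mapsto 1-x_i$ lets us assume $a_i\ge0$. Put $s=\sum a_i$ and let $X=\sum a_iU_i$, where the $U_i$ are independent and uniform on $[0,1]$. Let $g$ be the density of $X$.

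\emph{Volume formula.} The $(n-1)$-volume of $Q_n\cap H$ equals $|a|\,g(t)$. The best vertex is a vertex of $Q_n$ farthest from $H$, at distance $\max(t,s-t)/|a|$. Hence $n$ times the pyramid volume is $g(t)\max(t,s-t)$, which is invariant under scaling $a$.

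\emph{Symmetry and unimodality of $g$.} Since each $U_i$ is symmetric about $1/2$, $g$ is symmetric about $s/2$. Since $g$ is a convolution of log-concave densities, it is log-concave, hence nondecreasing on $[0,s/2]$. By the symmetry we may assume $t\le s/2$, so $n$ times the volume is $g(t)(s-t)$. By compactness $M_n$ is attained, so it suffices to treat a maximizing pyramid.

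\emph{Lifting.} Add the coordinate $x_{n+1}$ with coefficient $b=s-2t\ge0$. Take the central hyperplane $\{a\cdot x+bx_{n+1}=c\}$ with $c=(s+b)/2=s-t$, and take the vertex $0$. Applying the formula above in dimension $n+1$, $(n+1)$ times the volume of this pyramid is
$$c\int_0^1 g(c-bu)\,du=(s-t)\cdot\frac1b\int_t^{s-t}g(y)\,dy\;\ge\;(s-t)\,g(t).$$
The inequality holds because $g\ge g(t)$ on $[t,s-t]$, by symmetry and monotonicity. Therefore $nM_n\le(n+1)M_{n+1}^0$.

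\emph{Strictness: degenerate case $t=s/2$.} Here $b=0$, so the lift above is not a genuine construction. Instead take a small $b>0$ with $c=(s+b)/2$. The quantity becomes $\frac{s+b}{2}$ times the average of $g$ over $[\frac{s-b}{2},\frac{s+b}{2}]$. Because $g$ is symmetric and its maximum is at $s/2$, this average is $g(s/2)-o(b)$. Hence the product exceeds $\frac s2 g(s/2)$ for small $b$, which gives strict inequality.

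\emph{Strictness: case $t<s/2$.} Equality would require two things.
\begin{itemize}
\item $g$ is constant on $[t,s-t]$.
\item The lifted pyramid is extremal in $Q_{n+1}$.
\end{itemize}
By Proposition~\ref{p1}, extremality forces $(a,b)$ to be proportional to $(1,\dots,1)$. So all $a_i$ are equal and $b=a_1$. But then $g$ is the density of a sum of $n\ge2$ i.i.d.\ uniforms, which is strictly increasing on $[0,s/2]$. This contradicts the constancy of $g$ on $[t,s-t]$.

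\emph{Main obstacle.} I expect the delicate step to be justifying the strictness. Specifically, one must check that the case $t=s/2$ cannot be a maximizer, and handle the boundary case where some $a_i=0$. In that case the section is essentially lower-dimensional and $g$ can be flat. I would deal with it using the same Proposition~\ref{p1} argument, or by induction on the number of nonzero coefficients.
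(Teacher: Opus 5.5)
Your lift is the paper's lift $(a,2t)$ in a different normalization (up to a reflection of the cube), and your non-strict inequality is correct. Your strictness argument for $t<s/2$ is also correct, and a bit more direct than the paper's. Once Proposition~\ref{p1} forces all $a_i$ equal and $b=a_1$, you contradict equality in the averaging step by the strict monotonicity of the Irwin--Hall density. The paper instead passes through the equality case of Brunn--Minkowski and then argues that two sections are not translates. Since Proposition~\ref{p1} forces every $a_i>0$ before constancy of $g$ is used, your worry about vanishing coefficients is unnecessary there.

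The gap is the degenerate case $t=s/2$. Your claim that the average of $g$ over $[\frac{s-b}2,\frac{s+b}2]$ is $g(s/2)-o(b)$ does not follow from symmetry and maximality at $s/2$. It requires $g$ to be differentiable at $s/2$, and this fails when exactly two coefficients are nonzero and equal. For $n=2$, $a=(1,1)$, which is one of the configurations attaining $M_2$, you have $g(y)=1-|y-1|$, and the average is $1-b/4$. The conclusion happens to survive there, since $\bigl(1+\frac b2\bigr)\bigl(1-\frac b4\bigr)=1+\frac b4-\frac{b^2}8>1$. In general, though, you would need a quantitative bound on the kink of $g$ at its mode, and symmetry plus log-concavity give none. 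For instance, a density proportional to $e^{-\kappa|y-s/2|}$ on $[0,s]$ with $\kappa s>4$ makes your product smaller than $\frac s2\, g(s/2)$ for small $b$.

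The repair is what the paper does implicitly: $b=0$ \emph{is} a genuine construction. The hyperplane $\{a\cdot x=s/2\}\subset\mathbb R^{n+1}$ passes through $q_{n+1}$, and its section is $(Q_n\cap H)\times[0,1]$. Your formula then gives $n+1$ times the pyramid volume equal to $\frac s2\,g(s/2)$, which is exactly $n$ times the original volume. Since $(a,0)$ is not a diagonal direction, Proposition~\ref{p1} makes the inequality strict. This treats $t=s/2$ uniformly with $t<s/2$.
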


\section{Proofs}

\begin{proof}[Proof of Proposition~\ref{p1}]
It suffices to prove that if a pyramid in $Q_n$ with base $Q_n\cap H$ has
volume $M_n^0$, then $H$ is orthogonal to a main diagonal of $Q_n$, since the
assertion concerning the vertex and the converse are immediate.

For $n=2$, let $v=(a,b)\ne0$ be normal to $H$. The maximal area of a triangle
in $Q_2$ having $Q_2\cap H$ as a side is
$$\frac{|a|+|b|}{4\max\{|a|,|b|\}}\le\frac12=M_2^0,$$
with equality only if $|a|=|b|$.

Assume now that $n\ge3$. Let $\mathbf 1=(1,\ldots,1)\in\mathbb R^n$ and
let $z\ne0$ satisfy $\sum_{i=1}^n z_i=0$. Put
$v(\varepsilon)=\mathbf 1+\varepsilon z$, and let $V_n(\varepsilon)$ denote
the maximal volume of a pyramid in $Q_n$ whose base is the central section
orthogonal to $v(\varepsilon)$. Using the integral representation of 
$V_n(\varepsilon)$ (see, e.g., \cite[(3)]{A}), it follows that $V_n$ is twice 
differentiable at $0$ and $V_n(0)=c_n$, $V_n'(0)=0$, 
$V_n''(0)=\|z\|_2^2\bigl(c_n-c_{n-2}\bigr)<0$.
Hence the diagonal direction $\mathbf 1$ is a strict local maximizer.

Suppose that $v\ne\mathbf 1$ is another equality direction with non-negative
coordinates, normalized by $\sum_{i=1}^n v_i=n$. The proof of
\cite[Theorem~1]{A} shows that, for every $\varepsilon\in[0,1]$,
$(1-\varepsilon)\mathbf 1+\varepsilon v$ is again an equality direction,
contradicting the strict local maximality of $\mathbf 1$. Thus, by symmetry,
equality implies that $H$ is orthogonal to a main diagonal of $Q_n$.
\end{proof}

\begin{proof}[Proof of Proposition~\ref{p2}]
We may consider pyramids in $Q_3^0=[-1/2,1/2]^3$ whose bases are plane
sections of $Q_3^0$. Let $B$ be such a base. Writing the plane of $B$ as
$xX+yY+zZ=t/2$, by symmetry and homogeneity, we may assume $x\ge y\ge z\ge0$,
$t\ge0$, $x+y+z+t=1$. Then a pyramid in $Q_3^0$ with base $B$ has maximal
volume $V/6$, where
$$V=\frac{\operatorname{area}(B)}{\sqrt{x^2+y^2+z^2}}.$$
By \cite[(2.1)]{KK}, $V$ is given by five expressions,
corresponding to a parallelogram, a hexagon, a pentagon, a trapezoid, and a
triangle. Maximizing these expressions under the above normalization gives,
respectively, $2,\ 1+\sqrt2,\ 8-4\sqrt2,\ 9/4,\ 2$. Thus $M_3=(1+\sqrt2)/6$.
\end{proof}

\begin{proof}[Proof of Theorem~\ref{t4}]
For $\|a\|_2=1$ and $|t|\le \|a\|_1/2$, put
$H_{a,t}=\{x\in\mathbb R^n:\langle a,x-q_n\rangle=t\},$
$A_a(t)=\vol_{n-1}(Q_n\cap H_{a,t}).$ Let $C$ be a pyramid in $Q_n$ with 
base $Q_n\cap H_{a,t}$. By symmetry, we may assume
that $t\ge0$. Since the height of $C$ is at most $\|a\|_1/2+t$, we have
$$\vol_n(C)\le\frac{1}{2n}(\|a\|_1+2t)A_a(t).$$

Set $b=(a,2t)\in\mathbb R^{n+1}$ and let
$H_b=\{y\in\mathbb R^{n+1}:\langle b,y-q_{n+1}\rangle=0\}.$
Slicing $B=Q_{n+1}\cap H_b$ by the last coordinate gives
$$\vol_n(B)=\sqrt{1+4t^2}\int_{-1/2}^{1/2}A_a(2ts)\,ds.$$
By the Brunn--Minkowski inequality, $A_a(r)^{1/(n-1)}$ is concave on
its support. As $A_a$ is even, it is non-increasing on $[0,\infty)$. Hence
$$\vol_n(B)\ge\sqrt{1+4t^2}\,A_a(t).$$
Since the maximal height of a pyramid in $Q_{n+1}$ with base $B$ is
$(\|a\|_1+2t)/(2\sqrt{1+4t^2})$, we have
$$M_{n+1}^0\overset{\mathrm{(a)}}{\ge}\frac{1}{n+1}\frac{\|a\|_1+2t}{2\sqrt{1+4t^2}}\,
\vol_n(B)\overset{\mathrm{(b)}}{\ge}\frac{\|a\|_1+2t}{2(n+1)}A_a(t).$$
Combining this with the estimate for $\vol_n(C)$ above yields
$n\vol_n(C)\le(n+1)M_{n+1}^0.$

It remains to exclude equality. If equality held, then both inequalities (a) and (b)
must be equalities. Equality in (b) gives $A_a(r)=A_a(t)$ for $0\le r\le t$. By the
equality case of the Brunn--Minkowski inequality, the sections $Q_n\cap H_{a,0}$ and
$Q_n\cap H_{a,t}$ are translates of one another. On the other hand, equality in (a),
together with Proposition~\ref{p1}, gives that $a$ is a diagonal direction and
$t=1/(2\sqrt n)>0$. Hence the two sections are not translates, a contradiction.
\end{proof}

\begin{proof}[Proof of Proposition~\ref{p3}]
Recall that (see, e.g., \cite[(1.4)]{AG}), for $0<s<n$, the volume of the pyramid
with vertex $0$ and base $Q_n\cap\{x_1+\cdots+x_n=s\}$ is
$$V_n(s)=\frac{s}{n!}\sum_{k=0}^{[s]}(-1)^k\binom nk(s-k)^{n-1}.$$
For $n\ge3$, $V_n$ is differentiable at $n/2$ and $V_n'(n/2)=2c_n/n>0$.
So $M_n>V_n(n/2)=c_n$.

Next, it is known that $(n+1)c_{n+1}<nc_{n-1}$ for every odd $n\ge3$ and for every
$n\ge 137$ (see, e.g., \cite{AG}). Exact integer arithmetic verifies this inequality
for all even integers $4\le n\le136$. Then, by $M_{n+1}^0=c_{n+1}$ and Theorem \ref{t4},
$M_n<c_{n-1}$.
\end{proof}

\end{document}